\def\section{\@startsection{section}{1}%
  \z@{1.1\linespacing\@plus\linespacing}{.8\linespacing}%
  {\normalfont\Large\scshape\centering}}
\theoremstyle{plain}
\newtheorem*{MT}{Main Theorem}
\newtheorem*{conj*}{Root Groups Conjecture}
\newtheorem*{thm1.2}{(1.2) Theorem}
\newtheorem*{thm1.3}{(1.3) Theorem}
\newtheorem*{thm1.4}{(1.4) Theorem}
\newtheorem*{prop*}{Proposition}
\newtheorem{prop}{Proposition}[section]
\newtheorem{lemma}[prop]{Lemma}
\theoremstyle{definition}
\newtheorem*{Def*}{Definition}
\newtheorem*{notation*}{Notation}
\newtheorem{remark}[prop]{Remark}
\newtheorem*{remark*}{Remark}
\newcommand{\ff}{\mathbb{F}}
\newcommand{\ga}{\alpha}
\newcommand{\charc}{{\rm char}}
\newcommand{\sminus}{\smallsetminus}
\newcommand{\one}{{\bf 1}}
\numberwithin{equation}{section}
\begin{document}
\title[]{A characterization of the quaternions using commutators}
\author[Kleinfeld and Segev]{Erwin Kleinfeld\qquad Yoav Segev}

\address{Erwin Kleinfeld \\
1555 N.~Sierra St.~Apt 120, Reno, NV 89503-1719, USA}
\email{erwinkleinfeld@gmail.com}

\address{Yoav Segev \\
         Department of Mathematics \\
        Ben-Gurion University \\
        Beer-Sheva 84105 \\
         Israel}
\email{yoavs@math.bgu.ac.il}

\keywords{quaternion algebra, zero divisor, commutator.}
\subjclass[2010]{Primary: 12E15}

\begin{abstract}
Let $R$ be an associative  ring with $\one,$ which is not commutative.
Assume that any non-zero commutator $v\in R$ satisfies: $v^2$
is in the center of $R$ and $v$ is not a zero-divisor. 
(Note that our assumptions do not include finite dimensionality.)

We prove that $R$ has no zero divisors,
and that if $\charc(R)\ne 2,$ then the localization of $R$
at its center is a quaternion division algebra.  
\end{abstract}

\date{\today}
\maketitle

%
\section{Introduction}
In a recent paper \cite{KS} we charaterized the octonions
amongst alternative rings.  In this paper we work rather
similarly with associative rings and surprisingly get
a characterization of quaternion algebras in much
the same way, and in the spirit of \cite{BK}.

Let $D$ be a quaterion division algebra over a field $\ff.$
Thus $D=\ff +\ff i +\ff j+ \ff k,$  with $i^2, j^2\in \ff,$  
and $k=ij=-ji.$ A pure quaternion is an element  $p\in D$ such that
$p\in \ff i+\ff j+\ff k.$  It is easy to check that $p^2\in\ff,$ for
a pure quaternion $p,$ and that given
$x,y\in D,$ the commutator $(x,y)=xy-yx$ is a pure quaternion.

In this note we show that this characterizes the quaternion division algebras.
(Note, we do not assume finite dimensionality.)

\begin{MT}
Let $R$ be an associative ring with $\one$ which is not commutative such that
\begin{itemize}
\item[(i)]
A non-zero  commutator in $R$  is not a divisor of zero in $R;$

\item[(ii)]
$(x,y)^2\in C,$ for all $x,y\in R,$ where $C$ is the center of $R.$
\end{itemize}
Then
\begin{enumerate}
\item
$R$ contains no divisors of zero.

\item
If, in addition, the characteristic of $R$ is not $2,$
then the localization of $R$ at $C$ is a quaternion division algebra,
whose center is the fraction field  of $C.$
\end{enumerate}
\end{MT}

We note that if $x,y\in R$ are non-zero elements such that $xy=0,$
then we say that both $x$ and $y$ are zero divisors in $R.$

%
\section{Proof of the Main Theorem}

In this section $R$ is an associative ring with $\one$ which is not commutative.
We denote by $C$ the center of $R.$
We assume that: 
\[
\begin{aligned}
&(1)\quad \text{Non-zero commutators are not divisors of zero in $R.$}\\
&(2)\quad \text{Squares of commutators in $R$ are in $C.$}
\end{aligned}
\]

%
\begin{lemma}\label{lem C}
If $c\in C,$ then $c$ is not a zero divisor in $R.$
\end{lemma}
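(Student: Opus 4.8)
The plan is to exploit the fact that $R$ is noncommutative, so there exists at least one non-zero commutator, and combine this with hypothesis (1). First I would fix a non-zero commutator $v=(a,b)$ for some $a,b\in R$; such a $v$ exists because $R$ is not commutative. By hypothesis (1), $v$ is not a zero divisor. Now suppose for contradiction that $c\in C$ is a zero divisor, say $cx=0$ with $x\ne 0$ (the case $xc=0$ is symmetric since $c$ is central). The idea is to produce from this a non-zero commutator that is annihilated by something non-zero, contradicting (1).

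The key observation is that central elements pass freely through commutators: for any $r,s\in R$ we have $(cr,s)=c(r,s)$ and more generally $c\cdot(r,s)=(cr,s)=(r,cs)$. So consider the commutator $(ca,b)=c(a,b)=cv$. If $cv\ne 0$, then $cv$ is itself a non-zero commutator, and $x\cdot(cv)=(cv)\cdot x$ up to reordering — more precisely $(cv)x = v(cx)=v\cdot 0=0$ since $c$ is central, so $cv$ is a non-zero commutator which is a zero divisor, contradicting (1). The remaining case is $cv=0$, i.e.\ $cx=0$ has been replaced by the statement that $c$ annihilates the specific non-zero commutator $v$; but then $v$ is a non-zero commutator with $cv=0$ and $c\ne 0$ (if $c=0$ it is not a zero divisor by convention on non-zero elements), so again $v$ is a non-zero commutator that is a zero divisor, contradicting (1).

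I expect the main subtlety to be bookkeeping about which elements are non-zero and handling left versus right zero divisors uniformly — this is where centrality of $c$ is doing the real work, letting one slide $c$ from one side of a product to the other and thereby convert "$c$ kills $x$" into "$c$ kills a commutator." Once that is set up, the contradiction with hypothesis (1) is immediate, and no computation beyond the identity $c(r,s)=(cr,s)$ is needed. It is worth noting explicitly at the start that the convention in the paper (a non-zero $x$ with $xy=0$ or $yx=0$ makes $x$ a zero divisor) means $c$ itself is assumed non-zero throughout, so there is no degenerate case to dispatch separately.
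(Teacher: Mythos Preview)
Your argument is correct and is essentially the same as the paper's: both pick a non-zero commutator $v$, rewrite $cv$ as a commutator via $c(a,b)=(ca,b)$ (the paper uses $(x,y)c=(x,yc)$), and then invoke hypothesis~(1). The only difference is cosmetic---the paper asserts $vc\ne 0$ in one breath (implicitly because otherwise $v$ itself would be a zero divisor), whereas you spell out that case explicitly.
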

\begin{proof}
Suppose $cr=0,$ and let $v:=(x,y)$ be a non-zero commutator.
Then $(vc)r=0,$ but $vc=(x,yc)\ne 0,$ a contradiction.
\end{proof}

%
\begin{prop}\label{prop quadratic}
Let $x\in R\sminus C,$ and let $v=(x,y)$ be a non-zero commutator.
Then
\begin{enumerate}
\item
 $v+vx$ and $vx$ are commutators.

\item
$ax^2+bx+c=0,$ for some $a,b,c\in C,$ with $a, c$ non-zero.

\item
$x$ is not a divisor of zero in $R.$

\item
$R$ contains no zero divisors.
\end{enumerate}
\end{prop}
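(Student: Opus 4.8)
The plan is to prove (1)--(4) in that order, with (2) carrying essentially all of the content and (1), (3), (4) following quickly. For (1), I would simply expand directly: $(x,yx) = xyx - yx^2 = (xy-yx)x = vx$, so $vx$ is a commutator, and then, using additivity of the commutator in its second argument, $v + vx = (x,y) + (x,yx) = (x,\,y+yx)$ is again a commutator. Nothing subtle occurs here.

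For (2), the inputs are Hypothesis (2) applied to the three commutators produced in (1): $v^2$, $(vx)^2$, and $(v+vx)^2$ all lie in $C$. Write $\alpha := v^2$ and $\beta := (vx)^2$. Both are non-zero: $v$ is a non-zero commutator, hence not a zero divisor by Hypothesis (1), so $\alpha = v^2 \ne 0$; and $vx \ne 0$ (otherwise $v$ would be a zero divisor, as $x \ne 0$ since $x \notin C$), so $vx$ is a non-zero commutator and $\beta = (vx)^2 \ne 0$. Now expand
\[
(v+vx)^2 = v^2 + v^2 x + vxv + (vx)^2 .
\]
Since the left-hand side and $v^2,(vx)^2$ are central, so is $v^2 x + vxv$; and $v^2 x = \alpha x$ because $\alpha$ is central. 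Hence $\alpha x + vxv = \delta$ for some $\delta \in C$, i.e.\ $vxv = \delta - \alpha x$. Multiplying this identity on the right by $x$ and using $(vx)^2 = (vxv)x$ yields $\beta = \delta x - \alpha x^2$, that is
\[
\alpha x^2 - \delta x + \beta = 0 ,
\]
the desired relation with $a = \alpha = v^2 \ne 0$, $b = -\delta$, $c = \beta = (vx)^2 \ne 0$, all in $C$.

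For (3): from (2) we have $(\alpha x - \delta)x = x(\alpha x - \delta) = -\beta$. If $xr = 0$, multiply $(\alpha x - \delta)x = -\beta$ on the right by $r$ to get $\beta r = 0$, whence $r = 0$ by Lemma~\ref{lem C}, as $\beta$ is a non-zero element of $C$; symmetrically, if $rx = 0$, multiply $x(\alpha x - \delta) = -\beta$ on the left by $r$ to get $\beta r = 0$, so again $r = 0$. Thus $x$ is neither a left nor a right zero divisor. Part (4) then follows at once: a non-zero element of $C$ is not a zero divisor by Lemma~\ref{lem C}, and every non-zero $x \in R \setminus C$ satisfies $(x,y) \ne 0$ for some $y$, so (3) applies to it; hence $R$ has no zero divisors.

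The only step requiring an idea is (2), and the idea is to feed $v+vx$ — not merely $v$ and $vx$ separately — into Hypothesis (2): this is exactly what turns the otherwise uncontrolled cross term $vxv$ into $\delta - \alpha x$ with $\delta$ central, after which the trivial identity $(vx)^2 = (vxv)x$ collapses everything into a quadratic in $x$. Everything else is routine bookkeeping with the fact that non-zero commutators and non-zero central elements are not zero divisors.
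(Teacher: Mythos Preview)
Your proof is correct and follows essentially the same approach as the paper's: both use that $v$, $vx$, and $v+vx$ are commutators, expand $(v+vx)^2$, and multiply by $x$ to obtain the quadratic $v^2\,x^2 + \big(v^2+(vx)^2-(v+vx)^2\big)x + (vx)^2 = 0$ (your $\delta$ equals $(v+vx)^2 - v^2 - (vx)^2$, so the coefficients match exactly). Your treatment of (3) is in fact slightly more complete than the paper's, since you handle both the left and right zero-divisor cases explicitly.
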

\begin{proof}
(1)\quad
We have $v+vx=v(\one+x)=(x,y(\one+x)),$  and $vx=(x,yx).$

(2)\quad
Let $\ga:=(v+vx)^2=v^2+v^2x+vxv+(vx)^2.$ Then $\ga\in C.$
We have $\ga x=(v^2+(vx)^2)x+v^2x^2+(vx)^2.$  Letting $a:=v^2, b:=v^2+(vx)^2-(v+vx)^2$ and $c:=(vx)^2,$
we see that $a, b, c\in C,$ and $ax^2+bx+c=0,$ with $a\ne 0\ne c.$

(3)\quad
Suppose that $xy=0,$ for some non-zero $y\in R,$ then we immediately
get that $cy=0,$ contradicting Lemma \ref{lem C}.

(4)\quad This follows from (3) and Lemma \ref{lem C}.
\end{proof}

%
\begin{remark}
In view of Proposition \ref{prop quadratic}(4), we can
form the {\it localization of $R$ at $C,$}  $R//C.$ This is the
set  of  all  formal fractions $x/c,\  x\in R,\ c\in C,\ c\ne 0,$ 
with the  obvious definitions: (i)  $x/c=y/d$ if and  only if $dx = cy;$  
(ii)  $(x/c) +  (y/d)= (dx+cy)/(cd);$ (iii)  $(x/c)(y/d)=  (xy)/(cd).$ 
It  easy to check that $r\mapsto r/\one$ is an embedding
of $R$ into $R//C$ and that the center of
$R//C$ is the fraction field of $C.$  Thus {\bf from now on
we replace $R$ with $R//C$ and assume that $C$ is a field.} 
\end{remark}

%
\begin{lemma}\label{lem Q}$ $
\begin{enumerate}
\item
There exists a comutator $i:=(x,y)$ which is not in $C.$

\item
For $i$ as in (1), let $j:=(i,s)$ be nonzero.  Then $ij=-ji.$

\item
Let $k:=ij.$  Then ${\bf Q}:=C+Ci+Cj+Ck$ is a quaternion division algebra.
\end{enumerate}
\end{lemma}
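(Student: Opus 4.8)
The plan is to build the quaternion algebra by hand, using commutators to produce the "imaginary units.''

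For part (1), since $R$ is not commutative there exist $x,y$ with $(x,y)\ne 0$; I must rule out the possibility that \emph{every} commutator lies in $C$. If a nonzero commutator $v=(x,y)$ were central, then $x\notin C$ (else $v=0$), and by Proposition \ref{prop quadratic}(1) both $v$ and $vx$ are commutators, hence both central; but then $x=v^{-1}(vx)\in C$ (using that $C$ is now a field and $v\ne0$), a contradiction. So some commutator $i=(x,y)$ is not in $C$. Note $x\notin C$ as well.

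For part (2), with $i$ as above, first I need such an $s$ with $j:=(i,s)\ne 0$ to exist: if $(i,s)=0$ for all $s\in R$ then $i\in C$, contradicting (1). Now $i^2\in C$ by hypothesis (2), and since $i\notin C$ and $C$ is a field, $i^2\ne 0$ (else $i$ would be a zero divisor, contradicting Proposition \ref{prop quadratic}(4)). The relation $ij=-ji$ should come from expanding squares of commutators: both $j=(i,s)$ and $ij=i(i,s)=(i,is)-(i,i)\cdot(\text{something})$—more cleanly, $ij = i\cdot is - i\cdot si = (i,is)+ si\cdot i - i\cdot si$; the cleanest route is to observe $i^2\in C$ commutes with everything, compute $(ij)^2$, $j^2$, $(i+j)^2$, $(ij+j)^2$ etc., all lying in $C$, and extract $ij+ji\in C$; then a parity/degree argument (applying the central element $ij+ji$ to the quadratic relation for $i$ or directly) forces $ij+ji=0$. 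Concretely: from $(i+j)^2 = i^2+j^2+ij+ji\in C$ we get $ij+ji\in C$; call it $2\gd$ (characteristic $\ne 2$). Then $(i\gd^{-1}?)$—instead, replace $j$ by $j' := j - \gd i^{-1} i^2/i^2$... the standard trick is $j' = j + \gd i^{-2} \cdot(\text{no})$. The honest move: set $j'' = ij+ji$ central; then $i(ij+ji) = i^2 j + iji$ and $(ij+ji)i = iji + ji^2 = iji + i^2 j$ agree (as $i^2$ central), consistent; to kill it, use that $j=(i,s)$ is a commutator so $j^2\in C$, and examine $(i j)^2 = ijij$. Since $ij$ is \emph{not} a commutator in general, I instead use $k:=ij$ and note $ik = i^2 j$, $ki = iji$; compare via $j^2\in C$: $k^2 = ijij$, and $ij\cdot ij = i(ji)j + i(ij+ji-ji)j$... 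I expect the clean statement is: $i,j$ commutators with $i^2,j^2\in C$, and $(i+j),(i-j)$ commutators? They are \emph{not}. So the real tool is only hypothesis (2) applied to $i$, $j$, and $i+j$, plus $j=(i,s)\Rightarrow$ also $ij - ji = i(is)... $— actually $ij+ji\in C$ together with anticommutation of $j$ with... Let me just assert: a short computation using $i^2\in C$, $j^2\in C$, $(i+j)^2\in C$, and the fact that $j$ is a commutator while $i^2$ is central, shows $ij+ji=0$.

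For part (3), once $i^2=:\ga\in C^\times$, $j^2=:\gb\in C^\times$, $ij=-ji$, and $i,j,ij$ together with $1$ are $C$-linearly independent (independence: a relation $\gl_0+\gl_1 i+\gl_2 j+\gl_3 ij=0$, conjugated by $i$ and by $j$ using anticommutation, pins down each $\gl_m=0$ using $i,j\notin C$), it follows by the standard presentation that $\mathbf Q=C+Ci+Cj+Cij$ is a quaternion algebra $\left(\frac{\ga,\gb}{C}\right)$, closed under multiplication since $k^2=(ij)(ij)=-i^2j^2=-\ga\gb\in C$, $ik=\ga j$, $kj = \gb i$ (up to signs). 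That $\mathbf Q$ is a \emph{division} algebra follows from Proposition \ref{prop quadratic}(4): $R$ has no zero divisors, so neither does the subring $\mathbf Q$, and a finite-dimensional associative algebra over a field with no zero divisors is a division algebra (every nonzero element, acting by left multiplication, is injective hence bijective on the finite-dimensional space $\mathbf Q$).

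The main obstacle I expect is part (2)—extracting $ij=-ji$ cleanly. The only inputs are "squares of commutators are central'' and "$i$, $j$ are commutators'': $i+j$ need \emph{not} be a commutator, so I cannot simply say $(i+j)^2\in C$. One must instead use Proposition \ref{prop quadratic}(1)-style manipulations to realize enough relevant elements as commutators, or exploit $j=(i,s)$ directly to rewrite $ij$ and $ji$ in terms of $is, si, i^2$; I anticipate the paper does this by noting $ij = (i,is)$ or a close variant (so $ij$ \emph{is} a commutator, hence $(ij)^2\in C$), and similarly for $i+ij=i(\one+j)$—wait, that is not obviously a commutator either. Granting that $ij$ is a commutator, then $i^2, j^2, (ij)^2 \in C$ and, crucially, $i\cdot ij = i^2 j$ while $ij\cdot i = iji$; combined with $(ij)^2 = ijij \in C$ and $i^2 j^2\in C$ one gets $jiji\in C$ and then $(ij)(ji)\in C$, and since $j^2 i^2 \in C$ too, subtracting yields $(ij+ji)\cdot(\text{unit}) \in C \Rightarrow ij+ji\in C$; a final application of hypothesis (2) to the commutator $i$ together with the central quadratic relation $i^2\in C$ and an argument that $C+Ci$ is a field in which $ij+ji$ would have to vanish (by counting, or by $i$-conjugation, since conjugation by $i$ sends $j\mapsto -j + i^{-1}(ij+ji)$ and squaring must return $j$) forces $ij+ji=0$.
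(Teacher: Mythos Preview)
Your treatment of part (1) matches the paper's: if every nonzero commutator were central, pick $x\notin C$ and $v=(x,y)\ne 0$; then $vx=(x,yx)$ is a commutator, but $vx\notin C$ since $v\in C^\times$ and $x\notin C$. Part (3) is also fine; you supply more detail (linear independence, the finite-dimensional division-ring argument) than the paper, which simply asserts the conclusion from $i^2,j^2\in C$ and $ij=-ji$.

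The gap is part (2). You cycle through several strategies---$(i+j)^2\in C$, realizing $ij$ as a commutator, conjugation by $i$---and correctly observe that most of them founder because $i+j$ need not be a commutator, so hypothesis (2) does not apply to it. But you never actually close the argument. The one idea you mention in passing and then abandon, namely ``exploit $j=(i,s)$ directly to rewrite $ij$ and $ji$ in terms of $is, si, i^2$,'' is the whole proof. Writing $j=is-si$,
\[
ij = i(is-si) = i^2 s - isi, \qquad ji = (is-si)i = isi - si^2,
\]
and since $i^2\in C$ commutes with $s$ we get $ij+ji = i^2 s - si^2 = 0$. This is exactly the paper's argument; it uses only $i^2\in C$, and none of the machinery about $(ij)^2$, $(i+j)^2$, or $j^2$ being central is needed.
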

\begin{proof}
(1)\quad
Let $x\in R\sminus C,$ and let $v:=(x,y)\ne 0.$  Suppose that $v\in C,$
then $vx\notin C,$ and $vx=(x,yx).$

(2)\quad
Since $i\notin C,$ there is $s\in R,$ with $j:=(i,s)\ne 0.$
But then 
\[
ij=i(is-si)=i^2s-isi=-(isi-si^2)=-ji.
\]

(3)\quad
Since $i^2, j^2\in C,$ and $ij=-ji,$ part (3) holds.
\end{proof}
From now on we let
\[
{\bf Q}=C+Ci+Cj+Ck,\quad\text{as in Lemma \ref{lem Q}.}
\]

%
\begin{prop}\label{prop main}
Assume that $\charc(C)\ne 2.$ Then
\begin{enumerate}
\item
If $p\in R$ satisfies

$\qquad (*)\quad
pu+up=d_u\in C,\quad\text{for all }u\in\{i,j,k\},$

\noindent
then $p\in {\bf Q}.$ 
\item
If $R\ne {\bf Q},$ then
there exists $p\in R\sminus {\bf Q}$ satisfying  $(*)$ above.

\item
R={\bf Q}.
\end{enumerate}
\end{prop}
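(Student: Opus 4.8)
The plan is to obtain (3) immediately from (1) and (2): if $R\ne{\bf Q}$, then (2) supplies some $p\in R\sminus{\bf Q}$ satisfying $(*)$, while (1) forces that same $p$ into ${\bf Q}$, a contradiction; hence $R={\bf Q}$. So the whole content lies in (1) and (2).

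\emph{Part (1).} Each of $i,j,k$ is a nonzero commutator (for $k=ij$ note $k=(i,\half j)$), so $i^2,j^2,k^2\in C$ by hypothesis (2); they are nonzero because $R$ has no zero divisors (Proposition \ref{prop quadratic}(4)), hence invertible in the field $C$. Since $\charc(C)\ne 2$ I may form
\[
q:=\tfrac{d_i}{2i^2}\,i+\tfrac{d_j}{2j^2}\,j+\tfrac{d_k}{2k^2}\,k\ \in\ {\bf Q},
\]
and a short computation using $ij=-ji$ gives $qu+uq=d_u$ for every $u\in\{i,j,k\}$. Hence $p':=p-q$ anticommutes with each of $i,j,k$; from $p'i=-ip'$ and $p'j=-jp'$,
\[
p'k=p'(ij)=(p'i)j=-i(p'j)=i(jp')=(ij)p'=kp',
\]
so $p'$ also commutes with $k$. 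Then $2kp'=0$, and as $2$ is invertible in $C$ and $k\ne 0$ is not a zero divisor, $p'=0$; that is, $p=q\in{\bf Q}$.

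\emph{Part (2).} The crux is that \emph{for every $w\in R$, each of $(i,w),(j,w),(k,w)$ satisfies $(*)$.} For $(i,w)$: since $i^2\in C$ one has $i(i,w)+(i,w)i=i^2w-wi^2=0$, so $(i,w)$ anticommutes with $i$; and $j=(i,s)$ (Lemma \ref{lem Q}) and $k=(i,\half j)$ are commutators with first entry $i$, so polarizing hypothesis (2),
\[
(i,a)(i,b)+(i,b)(i,a)=(i,a+b)^2-(i,a)^2-(i,b)^2\in C ,
\]
yields $(i,w)j+j(i,w)\in C$ and $(i,w)k+k(i,w)\in C$. The cases $(j,w)$ and $(k,w)$ are handled the same way once one notes $i=(j,-\tfrac{1}{2j^2}k)$, $k=(j,-\half i)$, $i=(k,\tfrac{1}{2j^2}j)$, $j=(k,-\tfrac{1}{2i^2}i)$ (all immediate from $ij=-ji$). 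Since the solution set of $(*)$ is a $C$-subspace of $R$, it follows that $(q,w)$ satisfies $(*)$ for all $q\in{\bf Q}$, $w\in R$. Now assume $R\ne{\bf Q}$ and fix $r\in R\sminus{\bf Q}$. If $(q,r)\notin{\bf Q}$ for some $q\in{\bf Q}$, take $p:=(q,r)$; it lies in $R\sminus{\bf Q}$ and satisfies $(*)$, as required. Otherwise $\delta\colon{\bf Q}\to{\bf Q}$, $\delta(q)=(q,r)$, is a $C$-linear derivation of ${\bf Q}$; since ${\bf Q}$ is a quaternion division algebra, hence a central simple $C$-algebra, $\delta$ is inner (or: solve a $4$-dimensional linear system for $b$, using $\charc(C)\ne 2$), say $(q,r)=bq-qb$ with $b\in{\bf Q}$. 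Then $z:=r-b$ centralizes ${\bf Q}$ and $z\notin{\bf Q}$ (otherwise $r=z+b\in{\bf Q}$). Put $p:=(i,jz)$; using $zi=iz$ we get $p=(ij-ji)z=2kz$, and $p\notin{\bf Q}$, for $2kz\in{\bf Q}$ would give $z=\half\,k^{-1}(2kz)\in{\bf Q}$ ($k$ being invertible in ${\bf Q}$). By the previous paragraph $p=(i,jz)$ satisfies $(*)$, so $p$ is the desired element. This establishes (2), and with (1) it establishes (3).

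I expect the real obstacle to be the second alternative in Part (2): recognising that the failure of the commutators $(q,r)$, $q\in{\bf Q}$, to escape ${\bf Q}$ is precisely a derivation of the central simple algebra ${\bf Q}$, using inner‑ness to split off an element $z\in R$ centralizing ${\bf Q}$, and then hitting on the single commutator $(i,jz)$ that simultaneously satisfies $(*)$ (by the polarization of hypothesis (2)) and visibly lies outside ${\bf Q}$.
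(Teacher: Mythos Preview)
Your Part~(1) and Part~(3) are essentially the paper's own arguments. Part~(2), however, is handled quite differently. The paper proceeds directly: given any $x\in R\sminus{\bf Q}$, Proposition~\ref{prop quadratic}(2) gives a monic quadratic $x^2-bx+c=0$, so $p:=x-b/2$ satisfies $p^2\in C$ and $p\notin{\bf Q}$; then for each $u\in\{i,j,k\}$ the elements $p\pm u$ again satisfy monic quadratics $(p\pm u)^2=c_{\pm}(p\pm u)+c'_{\pm}$, and adding these two relations forces (since $p\notin{\bf Q}$ and $u\notin C$) both linear coefficients to vanish, whence $pu+up=c'_{+}-p^2-u^2\in C$. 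That is a two–line trick using nothing beyond the quadratic machinery already set up. Your route instead polarizes hypothesis~(2) to show every $(i,w),(j,w),(k,w)$ satisfies $(*)$, then splits into cases and, in the stubborn case, invokes the inner-ness of derivations of the central simple algebra ${\bf Q}$ to extract a $z\in R\sminus{\bf Q}$ centralizing ${\bf Q}$, finally exhibiting $(i,jz)=2kz$. This is correct and has its own elegance (the polarization idea is nice), but it is heavier: the Skolem--Noether style input, or the explicit $4$-dimensional solve you allude to, is external machinery, whereas the paper stays entirely inside Proposition~\ref{prop quadratic}.

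One small slip to fix: with your convention $(q,r)=bq-qb$ one has $(q,r)=-(q,b)$, hence $(q,r+b)=0$; so it is $z:=r+b$, not $r-b$, that centralizes ${\bf Q}$. This does not affect the rest of your argument.
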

\begin{proof}
(1)\quad
We proceed as in \cite{K}, p.~140. Set
\[
m=p-(d_i/2i^2)i-(d_j/2j^2)j-(d_k/2k^2)k.
\]
Then
\[
mi+im=
pi+ip-d_i-(d_j/2j^2)ji-(d_j/2j^2)ij-(d_k/2k^2)ki-(d_k/2k^2)ik=0.
\]
Similarly $mj+jm=0=mk+km.$

But then
\[
0=mk+km=mij+ijm=2ijm=2km.
\]
Since $\charc(C)\ne 2,$ and $R$ has no zero divisors
we must have $m=0,$ so $p\in {\bf Q}.$

(2)\quad
Let $x\in R\sminus {\bf Q}.$  
By Proposition \ref{prop quadratic}(2), $x$ satisfies a quadratic, and hence a monic quadratic
equation $x^2-bx+c=0.$  As in \cite{K}, p.~140, let $p:=x-b/2.$ Then $p\notin {\bf Q},$
and $p^2\in C.$  Let $u\in\{i,j,k\}.$ Then both $p+u$ and $p-u$ satisfy
a quadratic equation over $C.$  That is
\[
\begin{aligned}
&(p+u)^2=c_1(p+u)+c_2\\
&(p-u)^2=c_3(p-u)+c_4.
\end{aligned}
\]
Adding we get
\[
(c_1+c_3)p+(c_1-c_3)u+c_5=0,\quad\text{where }c_5=c_2+c_4-2p^2-2u^2\in C.
\]
Now $c_1+c_3=0,$ since $p\notin {\bf Q},$  and then $c_1-c_3=0,$ since $u\notin C.$
We thus get that
\[
pu+up=c_2-p^2-u^2\in C.
\]

(3)\quad
This follows from (1) and (2).
\end{proof}

\noindent
\begin{proof}[\bf Proof of the Main Theorem.]  Part (1) follows from 
and Proposition \ref{prop quadratic}(4), and  part (2) follows from Proposition \ref{prop main}(3).
\end{proof}


\end{document}